\newtheorem{theorem}{Theorem}[section]
\newtheorem{lemma}[theorem]{Lemma}
\newtheorem{proposition}[theorem]{Proposition}
\newtheorem{corollary}[theorem]{Corollary}
\theoremstyle{definition}
\newtheorem{definition}[theorem]{Definition}
\newtheorem{example}[theorem]{Example}
\theoremstyle{remark}
\numberwithin{equation}{section}
\newcommand{\pr}{\mathrm{pr}}
\newcommand{\eps}{\varepsilon }
\newcommand{\e}{\varepsilon}
\newcommand{\R}{\mathbb R}
\newcommand{\IR}{\mathbb R}
\newcommand{\N}{\mathbb N}
\newcommand{\U}{\mathbb U}
\newcommand{\K}{\mathfrak K}
\newcommand{\catB}{\mathfrak{B}}
\newcommand{\BI}{\mathfrak{BI}}
\newcommand{\DL}{\mathfrak L}
\newcommand{\FI}{\mathfrak{FI}}
\newcommand{\RFI}{\mathfrak{RI}}
\newcommand{\RI}{\mathfrak{RI}}
\newcommand{\w}{\omega}
\newcommand{\bas}{\mathrm{\ss}}
\newcommand{\conv}{\mathrm{conv}}
\begin{document}

\setcounter{page}{1}

\title[A universal Banach space with unconditional basis]{A universal Banach space\\ with a $K$-unconditional basis}

\author[T. Banakh \MakeLowercase{and} J. Garbuli\'nska-Wegrzyn]
{Taras Banakh$^{1,2}$ \MakeLowercase{and} Joanna Garbuli\'nska-W\c egrzyn$^{*1}$}

\address{$^{1}$ Jan Kochanowski University, Kielce, Poland}
\address{$^{2}$Ivan Franko National University of Lviv, Ukraine}
\email{\textcolor[rgb]{0.00,0.00,0.84}{t.o.banakh@gmail.com, jgarbulinska@ujk.edu.pl}}


\let\thefootnote\relax\footnote{Copyright 2016 by the Tusi Mathematical Research Group.}

\subjclass[2010]{Primary 46B04; Secondary 46B15, 46M15.}

\keywords{Banach space, unconditional Schauder basis, isometric embedding, Fra\"iss\'e limit}

\date{Received: xxxxxx; Revised: yyyyyy; Accepted: zzzzzz.
\newline \indent $^{*}$Corresponding author}

\begin{abstract}
For a constant $K\geq 1$ let $\catB_K$ be the class of pairs $(X,(\mathbf e_n)_{n\in\w})$ consisting of a Banach space $X$ and an unconditional Schauder basis $(\mathbf e_n)_{n\in\w}$ for $X$, having the unconditional basic constant $K_u\le K$. Such pairs are called $K$-based Banach spaces. A based Banach space $X$ is rational if the unit ball of any finite-dimensional subspace spanned by finitely many basic vectors is a polyhedron whose vertices have rational coordinates in the Schauder basis of $X$.

Using the technique of Fra\"iss\'e theory, we construct a rational $K$-based Banach space $\big(\U_K,(\mathbf e_n)_{n\in\w}\big)$ which is $\RI_K$-universal  in the sense that each basis preserving isometry $f:\Lambda\to\U_K$ defined on a based subspace $\Lambda$ of a finite-dimensional rational $K$-based Banach space $A$ extends to a basis preserving isometry $\bar f:A\to\U_K$ of the based Banach space $A$. 
We also prove that the $K$-based Banach space $\U_K$ is almost $\FI_1$-universal in the sense that any  base preserving
$\e$-isometry $f:\Lambda\to\U_K$ defined on a based subspace $\Lambda$ of a finite-dimensional $1$-based Banach space $A$ extends to a base preserving $\e$-isometry $\bar f:A\to\U_K$ of the based Banach space $A$. On the other hand, we show that no almost $\FI_K$-universal based Banach space exists for $K>1$.

The Banach space $\U_K$ is isomorphic to the complementably universal Banach space for the class of Banach spaces with an unconditional Schauder basis, constructed by Pe\l czy\'nski in 1969. \end{abstract} 
\maketitle

\parskip1pt

\section{Introduction}
A Banach space $X$ is ({\em complementably}\/) {\em universal} for a given class of Banach spaces if $X$ belongs to this class and every space from the class is isomorphic to a (complemented) subspace of $X$.

In 1969 Pe\l czy\'nski \cite{pelbases} constructed a complementably universal Banach space for the class of Banach spaces with a Schauder basis.
In 1971 Kadec \cite{kadec} constructed a complementably universal Banach space for the class of spaces with the \emph{bounded approximation property} (BAP).
In the same year Pe\l czy\'nski \cite{pel_any} showed that every Banach space with BAP is isomorphic to a complemented subspace of a Banach space with a basis.
Pe\l czy\'nski and Wojtaszczyk  \cite{wojtaszczyk} constructed in 1971 a (complementably) universal Banach space for the class of spaces with a finite-dimensional decomposition.
Applying Pe\l czy\'nski's decomposition argument \cite{pelczynski}, one immediately concludes that all three universal spaces are isomorphic.
It is worth mentioning a negative result of Johnson and Szankowski \cite{JohnSzan} saying that no separable Banach space can be complementably universal for the class of all separable Banach spaces.

In \cite{asia} the second author constructed an isometric version of the Kadec-Pe\l czy\'nski-Wojtaszczyk space.  The universal Banach space from \cite{asia} was constructed using the general categorical technique of Fra\"iss\'e limits \cite{kubis}. 
In \cite{taras}, for every $K\ge 1$ the authors constructed a universal space for the class of rational Banach spaces endowed with a normalized unconditional Schauder basis with a suppression constant $K_s\le K$. The constructed space is isomorphic to the complementably universal space for Banach spaces with unconditional basis, which was constructed by Pe\l czy\'nski in \cite{pelbases}.
By definition, the \emph{suppression constant} of an unconditional Schauder basis $(\mathbf{e}_n)_{n\in \omega}$ of a Banach space $X$ is the smallest real number $K_s$ such that $$\Big\| \sum_{n=0}^{\infty} \eps_n \cdot x_n\cdot \mathbf{e}_n\Big\| \leq K_s\cdot \| x\|$$ for any $x=\sum_{n=0}^{\infty}x_n\cdot \mathbf{e}_n\in X$ and any sequence $(\eps_n)_{n\in \omega} \in \{0,1\}^\omega$. 
A Banach space $X$ with a Schauder basis $(\mathbf e_n)_{n\in\w}$ is {\em rational} if for any finite set $F\subset \w$ the unit ball of the finite-dimensional subspace of $X$ spanned by the set $\{\mathbf e_n\}_{n\in F}$ coincides with a polyhedron whose vertices have rational coordinates in the basis $(\mathbf e_n)_{n\in F}$.

In this paper we apply the categorical method of Fra\"iss\'e limits for constructing a universal space $\U_K$ in the class of rational Banach spaces endowed with a normalized unconditional Schauder basis with unconditional constant $K_u\le K$, where $K$ is any real number $\ge 1$. By definition, the \emph{unconditional constant} of an unconditional Schauder basis $(\mathbf{e}_n)_{n\in \omega}$ of a Banach space $X$ is the smallest real number $K_u$ such that $$\Big\| \sum_{n=0}^{\infty} \eps_n \cdot x_n\cdot \mathbf{e}_n\Big\| \leq K_u\cdot \| x\|$$ for any $x=\sum_{n=0}^{\infty}x_n\cdot \mathbf{e}_n\in X$ and any sequence $(\eps_n)_{n\in \omega} \in \{-1,1\}^\omega$. It is known \cite[3.1.5]{AK} that $K_s\leq K_u \leq 2 K_s$. We shall also prove that the space $\U_K$ for every $\e>0$ contains an $\e$-isometric copy of any Banach space with 1-unconditional Schauder basis.

The universal space $\U_K$ (for rational Banach spaces with unconditional Schauder basis with unconditional constant $K_u\leq K$) constructed in this paper is isomorphic (but not isometric) to the universal space (for rational Banach spaces with unconditional Schauder basis with suppression constant $K_s\leq K$) built in \cite{taras}. In fact, the construction of the universal space $\U_K$ exploits the standard technique of Fra\"iss\'e limits \cite{kubis} and is similar to the constructions from \cite{asia} and especially from \cite{taras}. So, in the proofs of some statements we refer the reader to \cite{taras} and provide full proofs of lemmas that differ from the corresponding lemmas in \cite{taras}.

\section{Preliminaries}
All Banach spaces considered in this paper are separable and are over the field $\R$ of real numbers.
\subsection{Definitions}

Let $X$ be a Banach space with a Schauder basis $(\mathbf e_n)_{n=1}^\infty$ and let $(\mathbf e_n^*)_{n=1}^\infty$ be the corresponding sequence of coordinate functionals.
The basis $(\mathbf e_n)_{n=1}^\infty$ is called {\em $K$-unconditional} for a real constant $K\ge 1$ if $K_u \leq K$.
Since each $K$-unconditional Schauder basis $(\mathbf e_n)_{n=1}^\infty$ is unconditional, for any $x\in X$ and any permutation $\pi$ of $\N$ the series $\sum_{n=1}^\infty \mathbf e_{\pi(n)}^*(x)\cdot \mathbf e_{\pi(n)}$ converges to $x$. This means that we can forget about the ordering and think of a $K$-unconditional basis of a Banach space as a subset $\bas\subset X$ such that for some bijection $\mathbf e:\mathbb N\to \bas$ the sequence $(\mathbf e(n))_{n=1}^\infty$ is a $K$-unconditional Schauder basis for $X$.

More precisely, by a {\em normalized $K$-unconditional basis} for a Banach space $X$ we shall understand a subset $\bas\subset X$ for which there exists a family $\{\mathbf e^*_b\}_{b\in \bas}\subset X^*$ of continuous functionals such that
\begin{itemize}
\item $\|b\|=1=\mathbf e^*_b(b)$ for any $b\in \bas$;
\item $\mathbf e_b^*(b')=0$ for every $b\in \bas$ and $b'\in \bas\setminus \{b\}$;
\item $x=\sum_{b\in \bas}\mathbf e^*_b(x)\cdot b$ for every $x\in X$;
\item for any function $\pm:\bas\to\{-1,1\}$ the operator $T_\pm:X\to X$, $T_\pm:x\mapsto \sum_{b\in \bas}\pm(b)\cdot\mathbf e^*_b(x)\cdot b$, is well-defined and has norm $\|T_\pm\|\le K$.
\end{itemize}
The equality $x=\sum_{b\in \bas}\mathbf e^*_b(x)\cdot b$ in the third item means that for every $\eps>0$ there exists a finite subset $F\subset \bas$ such that $\|x-\sum_{b\in E}\mathbf e^*_b(x)\cdot b\|<\eps$ for every finite subset $E\subset \bas$ containing $F$.

By a \emph{$K$-based Banach space} we shall understand a pair $(X,\bas_X)$ consisting of a Banach space $X$ and a normalized $K$-unconditional basis $\bas_X$ for $X$. A {\em based Banach space} is a pair $(X,\bas_X)$, which is a $K$-based  Banach space for some $K\ge 1$. A based Banach space $(X,\bas_X)$ is a \emph{subspace} of a based Banach space $(Y,\bas_Y)$ if $X\subseteq Y$ and $\bas_X=X\cap \bas_Y$.

For a Banach space $X$ by $\|\cdot\|_X$ we denote the norm of $X$ and by $$B_X:=\{x\in X:\|x\|_X\le 1\}$$ the closed unit ball of $X$.

A finite-dimensional based Banach space $(X,\bas_X)$ is \emph{rational} if its unit ball is a convex polyhedron spanned by finitely many vectors whose coordinates in the basis $\bas_X$ are rational. 
A based Banach space $(X,\bas_X)$ is \emph{rational} if each finite dimensional $K$-based subspace of $(X,\bas_X)$ is rational.

\subsection{Categories}
Let $\K$ be a category. For two objects $A,B$ of the category $\K$, by $\K(A,B)$ we denote the set of all $\K$-morphisms from $A$ to $B$. A \emph{subcategory} of $\K$ is a category $\DL$ such that each object of $\DL$ is an object of $\K$ and each morphism of $\DL$ is a morphism of $\K$. Morphisms and isomorphisms of a category $\K$ will be called {\em $\K$-morphisms} and {\em $\K$-isomorphisms}, respectively.

A subcategory $\mathfrak L$ of a category $\K$ is {\em full} if each $\K$-morphism between objects of the category $\mathfrak L$ is an $\mathfrak L$-morphism.

A category $\DL$ is \emph{cofinal} in $\K$ if for every object $A$ of $\K$ there exists an object $B$ of $\DL$ such that the set $\K(A,B)$ is nonempty. A category $\K$ has the {\it amalgamation property} if for every objects $A, B, C$ of $\K$ and for every morphisms $f\in \K(A,B)$, $g\in \K(A,C)$ we can find an object $D$ of $\K$ and morphisms $f'\in \K(B,D)$, $g'\in \K(C,D)$ such that $f'\circ f=g'\circ g$.

In this paper we shall work in the category $\catB$, whose objects are  based Banach spaces. For two based Banach spaces $(X,\bas_X)$, $(Y,\bas_Y)$, a morphism of category $\catB$ is a linear continuous operator $T: X\to Y$ such that $T(\bas_X)\subseteq \bas_Y$. 

A morphism $T:X\to Y$ of the category $\catB$ is called an \emph{isometry} (or else an {\em isometry morphism}) if $\|T(x)\|_Y=\|x\|_X$ for any $x\in X$.  By $\BI$ we denote the category whose objects are based Banach spaces and morphisms are isometry morphisms of based Banach spaces. The category $\BI$ is a subcategory of the category $\catB$. 

For any real number $K\ge 1$ let $\catB_K$ (resp. $\BI_K$) be the category whose objects are $K$-based Banach spaces and morphisms are (isometry) $\catB$-morphisms between $K$-based Banach spaces. So, $\catB_K$ and $\BI_K$ are full subcategories of the categories $\catB$ and $\BI$, respectively.  

By $\FI_K$ we denote the full subcategory of $\BI_K$, whose objects are finite-dimensional  $K$-based Banach spaces, and by $\RFI_K$ the full subcategory of $\FI_K$  whose objects are rational finite-dimensional  $K$-based Banach spaces.
So, we have the inclusions $\RFI_K\subset\FI_K\subset\BI_K$ of categories.

From now on we assume that $K\ge 1$ is some fixed real number.

\subsection{Amalgamation}
In this section we prove that the categories $\FI_K$ and $\RI_K$ have the amalgamation property.

\begin{lemma}[\bf Amalgamation Lemma]\label{lemat} Let $X$, $Y$, $Z$ be finite-dimensional $K$-based Banach spaces and $i:Z\to X$, $j:Z \to Y$ be $\BI_K$-morphisms. Then there exist a finite-dimensional $K$-based Banach space $W$ and $\BI_K$-morphisms $i':X\to W$ and $j':Y\to W$ such that  the diagram
$$\xymatrix{
Y \ar[r]^{j'} &W \\
Z \ar[r]^i \ar[u]^j & X\ar[u]^{i'}}$$
is commutative.\\
Moreover, if the $K$-based Banach spaces $X$, $Y$, $Z$ are rational, then so is the $K$-based Banach space $W$.
\end{lemma}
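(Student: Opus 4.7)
The plan is to realize $W$ as the vector-space pushout of $X$ and $Y$ over $Z$, equipped with the convex-hull norm. Algebraically, set $W := (X \oplus Y)/N$ with $N := \{(i(z), -j(z)) : z \in Z\}$, and let $i' : X \to W$, $j' : Y \to W$ be the canonical linear maps; injectivity of $i$, $j$ implies injectivity of $i'$, $j'$, and $i' \circ i = j' \circ j$ together with the pushout identity $i'(X) \cap j'(Y) = i'(i(Z))$ hold by construction. Take $\bas_W := i'(\bas_X) \cup j'(\bas_Y)$, with the identification $i'(i(b)) = j'(j(b))$ for $b \in \bas_Z$; this is an algebraic basis of the finite-dimensional space $W$. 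Let $\|\cdot\|_W$ be the Minkowski functional of
\[
B_W := \conv\bigl(i'(B_X) \cup j'(B_Y)\bigr),
\]
equivalently $\|w\|_W = \inf\{\|x\|_X + \|y\|_Y : w = i'(x) + j'(y)\}$. Compactness, convexity, symmetry and the absorbing property of $B_W$ (all inherited from $B_X, B_Y$) make this a genuine norm.

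The key step is to verify that $i'$ and $j'$ are isometric; by symmetry it suffices to show $B_W \cap i'(X) = i'(B_X)$. The inclusion $\supseteq$ is immediate. For the reverse, I would take $w = i'(x_0) \in B_W$ and write it as a convex combination $w = \sum_k \lambda_k v_k$ with $v_k \in i'(B_X) \cup j'(B_Y)$ and $\sum_k \lambda_k = 1$. Splitting into the $X$- and $Y$-contributions produces $a = i'(x_a)$ and $b = j'(y_b)$ satisfying $\|x_a\|_X \le \alpha$ and $\|y_b\|_Y \le \beta$, where $\alpha + \beta = 1$ are the total weights on each side and $w = a + b$. Since $w \in i'(X)$, the element $b$ lies in $i'(X) \cap j'(Y) = i'(i(Z))$, so $y_b = j(z)$ for some $z \in Z$ with $\|z\|_Z \le \beta$; injectivity of $i'$ yields $x_0 = x_a + i(z)$, and the triangle inequality gives $\|x_0\|_X \le \alpha + \beta = 1$.

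For the $K$-unconditional basis property, fix signs $\pm : \bas_W \to \{-1, 1\}$ and observe that the associated sign-flip operator $T_\pm$ on $W$ restricts, via $i'$ and $j'$, to sign-flip operators on $X$ and $Y$ respectively, each of norm $\le K$ by hypothesis. Hence $T_\pm(i'(B_X)) \subseteq K \cdot i'(B_X)$ and $T_\pm(j'(B_Y)) \subseteq K \cdot j'(B_Y)$, and taking convex hulls gives $T_\pm(B_W) \subseteq K B_W$, so $\|T_\pm\|_W \le K$. Combined with the isometry of $i'$ and $j'$ (which forces $\|i'(b)\|_W = \|b\|_X = 1$ for $b \in \bas_X$, and analogously for $\bas_Y$), this shows that $(W, \bas_W)$ is a $K$-based Banach space. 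For the rational addendum: if $B_X$ and $B_Y$ are polyhedra whose vertices have rational coordinates in $\bas_X$ and $\bas_Y$, then $B_W$ is the convex hull of a finite set of vectors with rational coordinates in $\bas_W$, hence itself a rational polyhedron.

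The main obstacle I anticipate is the isometry verification in the middle step: one must exploit the precise pushout identity $i'(X) \cap j'(Y) = i'(i(Z))$ together with the splitting of convex combinations to preclude any cancellation between contributions from $B_X$ and $B_Y$ that could artificially shrink the norm of an element of $i'(X)$ inside $W$. Once that is secured, the $K$-unconditional bound and the rationality clause are essentially formal consequences of the convex-hull construction.
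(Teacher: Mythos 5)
Your proposal is correct and follows essentially the same route as the paper: $W$ is the quotient $(X\oplus Y)/\{(i(z),-j(z)):z\in Z\}$ with the $\ell_1$-sum quotient norm, whose unit ball is exactly your $\conv(i'(B_X)\cup j'(B_Y))$, and your isometry verification via splitting convex combinations and the pushout identity is the convex-geometric rephrasing of the paper's triangle-inequality estimate $\|x-i(z)\|_X+\|i(z)\|_X\ge\|x\|_X$. Your derivation of the $K$-unconditional bound from $T_\pm(B_W)\subseteq K\cdot\conv(i'(B_X)\cup j'(B_Y))$ is a cleaner packaging of the paper's explicit computation with infima over splittings $w_b'+w_b''=w_b$, but it is the same argument in substance.
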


\begin{proof}
Consider the direct sum $X\oplus Y$ of the Banach spaces $X$, $Y$ endowed with the norm $\| (x,y)\| =\| x\| _X+\| y\| _Y$.
Let $W=(X\oplus Y)/\Delta $ be the quotient by the subspace $\Delta =\{(i(z),-j(z)):z\in Z\}$.

We define linear operators $i':X\to W$ and $j':Y\to W$ by $i'(x)=(x,0)+\Delta $ and $j'(y)=(0,y)+\Delta .$\\
Let us show $i'$ and $j'$ are isometries. Indeed, for every $x\in X$
\begin{align*}
\| i'(x)\|_W &=\operatorname{dist}((x,0),\Delta )\leq \| (x,0)\| =\| x\| _X+\| 0\| _Y=\| x\| _X.
\end{align*}
On the other hand, for every $z\in Z$
\begin{align*} 
&\| (x,0)-(i(z),-j(z))\| =\| (x-i(z),j(z))\| =\| x-i(z)\| _X+\| j(z)\| _Y=\\
&=\|  x-i(z)\| _X+\|z\|_Z=\|  x-i(z)\| _X+\| i(z)\| _X\geq \| x-i(z)+i(z)\| _X=\| x\| _X
\end{align*}
and hence $\|x\|_X \leq \inf_{z\in Z} \| (x,0)-(i(z),-j(z))\|=\|i'(x)\|_W$. Therefore $\|i'(x)\|_W=\|x\|_X$.
Similarly, we can show that $j'$ is an isometry.

Since $i:Z\to X$ and $j:Z\to Y$ are $\BI_K$-morphisms, for every $b\in \bas_Z$ we have $i(b)\in\bas_X$ and $j(b)\in\bas_Y$ and $$i'\circ i(b)=(i(b),0)+\Delta\ni (i(b),0)-(i(b),-j(b))=(0,j(b))\in (0,j(b))+\Delta=j'\circ j(b),$$
which implies $i'\circ i(b)=j'\circ j(b)$ and hence $i'\circ i=j'\circ j$.  

We shall identify $X$ and $Y$ with their images $i'(X)$ and $j'(Y)$ in $W$ and $Z$ with its image $i'\circ i(Z)=j'\circ j(Z)$ in $W$. In this case the union $\bas_W:=\bas_X\cup \bas_Y$ is a Schauder basis for the (finite-dimensional)  Banach space $W$ and $\bas_X\cap\bas_Y=\bas_Z$. Since the embeddings $i,j,i',j'$ are isometries, each vector $b\in\bas_W$ has norm one. 

Next, we show that the basis $\bas_W$ has unconditional constant $\le K$. Given any $w\in W$ we should prove the upper bound 
$$\big\| \sum _{b\in \bas_W} \pm_b\cdot w_b\cdot b \big\| \leq K\cdot\big\| \sum _{b\in \bas_W} w_b\cdot b\big\|,$$ 
where $\pm_b\in \{-1,1\}$ and $(w_b)_{b\in \bas_W}$ are the coordinates of $w=\sum_{b\in \bas_W}w_b \cdot b$ in the basis $\bas_W$.

Taking into account that the bases $\bas_X$ and $\bas_Y$ of the $K$-based Banach spaces $X$, $Y$ have unconditional constant $\le K$, we obtain: 
{\small
\begin{align*}
\big\| \sum _{b\in \bas_W}& \pm_b w_b b\big\|_W =\inf \Big\{ \big\| \sum _{b\in \bas_X\backslash  \bas_Z} \pm_b w_b b+\sum _{b \in \bas_Z}w'_b b\big\|_X+\big\|\sum _{b \in \bas_Z}w''_b b +\sum _{b\in \bas_Y\backslash  \bas_Z} \pm_b w_b b \big\|_Y: \\
&(w'_b)_{b\in \bas_Z},(w''_b)_{b\in \bas_Z}\in\mathbb R^{\bas_Z}, \quad \forall  b\in \bas_Z \;\;w'_b+w''_b=\pm_b w_b\Big\}= \\
=&\inf \Big\{ \big\| \sum _{b\in \bas_X\backslash  \bas_Z} \pm_b w_b b+\sum _{b \in \bas_Z}\pm_bw'_b b\big\|_X+\big\|\sum _{b \in \bas_Z}\pm_bw''_b b +\sum _{b\in \bas_Y\backslash  \bas_Z} \pm_b w_b b \big\|_Y: \\&
(w'_b)_{b\in \bas_Z},(w''_b)_{b\in \bas_Z}\in\mathbb R^{\bas_Z}, \quad \forall  b\in \bas_Z \;\; w'_b+w''_b=w_b\Big\} \leq \\&
\leq K\cdot\inf \Big\{ \big\| \sum _{b\in \bas_X\backslash  \bas_Z} w_b b+\sum _{b \in \bas_Z}w'_b b\big\|_X+\big\|\sum _{b \in \bas_Z}w''_b b +\sum _{b\in \bas_Y\backslash  \bas_Z}  w_b b \big\|_Y: \\&
(w'_b)_{b\in \bas_Z},(w''_b)_{b\in \bas_Z}\in\mathbb R^{\bas_Z}, \quad \forall  b\in \bas_Z \;\; w'_b+w''_b= w_b\Big\} =K\cdot\big\| \sum _{b\in \bas_W} w_b b\big\|_W.
\end{align*}
}

If the finite-dimensional based Banach spaces $X$ and $Y$ are rational, then so is their sum $X\oplus Y$ and so is the quotient space $W$ of $X\oplus Y$.
\end{proof}

\section{$\catB$-universal based Banach spaces}

\begin{definition}\label{d:uB} A based Banach space $U$ is defined to be {\em $\catB$-universal} if each based Banach space $X$ is $\catB$-isomorphic to a based subspace of $U$.
\end{definition}

Definition~\ref{d:uB} implies that each $\catB$-universal based Banach space is complementably universal for the class of Banach spaces with unconditional basis.
 Reformulating Pe\l czy\'nski's Uniqueness Theorem 3 \cite{pelbases}, we obtain the following uniqueness result.

\begin{theorem}[Pe\l czy\'nski]\label{t:up} Any two $\catB$-universal based Banach spaces are $\catB$-isomorphic.
\end{theorem}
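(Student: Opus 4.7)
My plan is to reduce Theorem \ref{t:up} directly to the classical Pe\l czy\'nski Uniqueness Theorem 3 in \cite{pelbases}, precisely as the sentence immediately preceding the theorem foreshadows. The chain of implications I would traverse is: $\catB$-universality $\Longrightarrow$ complementable universality (in the classical sense) for the class of Banach spaces with an unconditional Schauder basis $\Longrightarrow$ (by Pe\l czy\'nski) mutual isomorphism of any two such spaces.

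The only implication requiring an argument is the first one, asserted in the paragraph preceding the theorem. Given a $\catB$-universal based Banach space $(U,\bas_U)$ and any based subspace $(X,\bas_X)$ of $U$, I would produce an explicit bounded linear projection by setting $P_X(u):=\sum_{b\in\bas_X}\mathbf{e}^*_b(u)\cdot b$. This is the suppression operator onto the coordinates indexed by $\bas_X$; by the unconditional basis property of $\bas_U$ (applied with the $\{0,1\}$-valued sign pattern that is the indicator function of $\bas_X$) it is well defined, has range $X$, is idempotent, and satisfies $\|P_X\|\le K_s$, where $K_s$ is the suppression constant of $\bas_U$. Using the inequality $K_s\le K_u$ recalled in the Introduction, $P_X$ is bounded by the unconditional constant of $U$. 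Hence every based subspace is complemented in $U$, so each Banach space with unconditional Schauder basis embeds as a complemented subspace of $U$. This is exactly the hypothesis of \cite[Theorem~3]{pelbases}, whose conclusion, applied to two $\catB$-universal based Banach spaces $U$ and $V$, gives the sought isomorphism.

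The only subtlety I foresee—and what I would flag as the potential obstacle—is one of interpretation. A $\catB$-isomorphism, read in the strict categorical sense, must send $\bas_U$ bijectively onto $\bas_V$, whereas Pe\l czy\'nski's theorem delivers a Banach-space isomorphism that need not respect the chosen bases. If Theorem \ref{t:up} is intended in this stronger basis-preserving sense, one needs to upgrade the conclusion through a Pe\l czy\'nski-type decomposition performed \emph{internally} in $\catB$: verify the categorical self-absorption $U\cong U\oplus W$ (with the direct sum carrying the disjoint union of bases and an appropriate sum norm preserving the unconditional constant) for suitable $W$, mirror this for $V$, and combine via the standard categorical Cantor--Bernstein. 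Since the authors describe the result as a mere ``reformulation'' of Pe\l czy\'nski's theorem, I would expect them to take the weaker interpretation and end the proof at the citation, leaving no genuine new difficulty.
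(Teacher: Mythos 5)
Your proposal matches the paper, which offers no proof beyond the sentence you quote: Theorem~\ref{t:up} is obtained purely by reformulating Pe\l czy\'nski's Uniqueness Theorem~3, and your suppression-projection argument (showing every based subspace of a $\catB$-universal space is the range of a bounded coordinate projection, so $\catB$-universality implies complementable universality for spaces with unconditional basis) is exactly the bridge that justifies the word ``reformulating.'' One correction to your closing hedge, though: the theorem is meant in the strong, basis-preserving sense --- a $\catB$-isomorphism by the paper's definition carries $\bas_U$ bijectively onto $\bas_V$, and later statements such as Corollary~\ref{c:ru} depend on this --- but no internal categorical decomposition argument is needed, because Pe\l czy\'nski's Theorem~3 is itself a uniqueness statement about universal \emph{unconditional bases} up to equivalence (the isomorphism it produces already maps basis vectors to basis vectors, and for unconditional bases the ordering is immaterial), so the citation delivers the categorical conclusion as stated.
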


A $\catB$-universal based Banach space $\U$ was constructed by Pe\l czy\'nski in \cite{pelbases}. In the following sections we shall apply the technique of Fra\"iss\'e limits to construct many $\catB$-isomorphic copies of the Pe\l czy\'nski's $\catB$-universal space $\U$.

\section{$\RI_K$-universal based Banach spaces}


\begin{definition}\label{def1}
A based Banach space $X$ is called \emph{$\RI_K$-universal} if for any rational finite-dimensional $K$-based Banach space $A$, any isometry morphism $f:\Lambda\to X$ defined on a based subspace $\Lambda$ of $A$ can be extended to an isometry morphism $\bar f:A\to X $.
\end{definition}

We recall that $\RI_K$ denotes the full subcategory of $\BI$ whose objects are rational finite-dimensional $K$-based Banach spaces.
Obviously, up to $\RI_K$-isomorphism the category $\RI_K$ contains countably many objects.
By Lemma~\ref{lemat}, the category $\RI_K$ has the amalgamation property.
We now use the concepts from \cite{kubis} for constructing a ``generic" sequence in $\RI_K$.
A sequence $(X_n)_{n\in \omega}$ of objects of the category $\BI_K$ is called \emph{a chain} if each $K$-based Banach space $X_n$ is a subspace of the $K$-based Banach space $X_{n+1}$.  

\begin{definition}\label{Fresse}
A chain $(U_n)_{n\in \omega}$ of objects of the category $\RI_K$ is \emph{Fra\"iss\'e} if for any $n\in \omega$ and $\RI_K$-morphism $ f: {U_n} \to Y$ there exist $m > n$ and an $\RI_K$-morphism  $g: Y\to {U_m}$ such that $g \circ f:U_n\to U_m$ is the identity inclusion of $U_n$ to $U_m$.
\end{definition}

Definition \ref{Fresse} implies that the Fra\"iss\'e sequence $\{U_n\}_{n\in \omega}$ is cofinal in the category $\RI_K$ in the sense that each object $A$ of the category $\RI_K$ admits an $\RI_K$-morphism $A\to U_n$ for some $n\in \omega$. In this case the category $\RI_K$ is countably cofinal.

The name ``Fra\"iss\'e sequence", as in \cite{kubis}, is motivated by the model-theoretic theory of Fra\"iss\'e limits developed by Roland Fra\"iss\'e \cite{fra}.
One of the results in \cite{kubis} is that every countably cofinal category with amalgamation has a Fra\"iss\'e sequence.
Applying this general result to our category $\RI_K$ we get:

\begin{theorem}[\cite{kubis}]\label{Kubisia}
The category $\RI_K$ has a Fra\"iss\'e sequence.
\end{theorem}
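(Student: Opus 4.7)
The plan is to invoke the general existence theorem from \cite{kubis} which asserts that any countably cofinal category with the amalgamation property admits a Fra\"iss\'e sequence. Two hypotheses need checking.

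For amalgamation I would apply the Amalgamation Lemma~\ref{lemat}. Given $\RI_K$-morphisms $i:Z\to X$ and $j:Z\to Y$ between rational finite-dimensional $K$-based Banach spaces, that lemma yields a finite-dimensional $K$-based Banach space $W$ together with $\BI_K$-morphisms $i':X\to W$ and $j':Y\to W$ satisfying $i'\circ i=j'\circ j$. The ``moreover'' clause of the lemma asserts that $W$ is rational whenever $X$, $Y$, $Z$ are, so the amalgamating diagram lives entirely inside $\RI_K$.

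For countable cofinality I would observe that $\RI_K$ has only countably many isomorphism types. An object $A$ of $\RI_K$ is determined, up to $\RI_K$-isomorphism, by the integer $\dim A$ together with the finite list of rational vertices of the unit ball $B_A$ expressed in the distinguished basis; this is finite rational data, producing countably many types overall. Moreover, for any two objects $A,B$ of $\RI_K$ the hom-set $\RI_K(A,B)$ is finite, since an $\RI_K$-morphism $A\to B$ is a basis-preserving isometric embedding, uniquely determined by an injection of the finite basis $\bas_A$ into $\bas_B$. This is amply sufficient for countable cofinality in the sense of \cite{kubis}.

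With both hypotheses verified, the general result of \cite{kubis} supplies the required Fra\"iss\'e sequence in $\RI_K$. The substantive work is therefore absorbed into that reference: the standard diagonal bookkeeping handles one extension problem $f:U_n\to Y$ at each stage by taking $U_{n+1}$ to be the amalgam provided by Lemma~\ref{lemat}, arranged so that every $\RI_K$-morphism out of some $U_n$ is absorbed at a later stage. The only ingredient specific to $\RI_K$, rather than generic, is the preservation of rationality through the amalgamation step, which is exactly the content of the final sentence of Lemma~\ref{lemat}; I do not anticipate this to be a serious obstacle, and indeed the proof reduces to a direct citation of \cite{kubis}.
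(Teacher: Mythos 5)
Your proposal matches the paper's treatment exactly: the paper also derives the theorem by citing the general result of \cite{kubis} that every countably cofinal category with amalgamation admits a Fra\"iss\'e sequence, with amalgamation supplied by Lemma~\ref{lemat} (including its rationality-preserving clause) and countable cofinality by the observation that $\RI_K$ has only countably many objects up to isomorphism. Your extra remarks on finiteness of hom-sets and the explicit encoding of objects by rational vertex data are correct elaborations of what the paper calls ``obvious,'' but the route is the same.
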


From now on, we fix a Fra\"iss\'e\ sequence $(U_n)_{n\in \omega}$ in $\RI_K$.
Let $\U_K$ be the completion of the union $\bigcup_{n\in \omega}U_n$ and $\bas_{\U_K}=\bigcup_{n\in \omega}\bas_{U_n}\subset\U_K$. The proof of the following lemma literally repeats the proof of Lemma~4.4 in \cite{taras}.

\begin{lemma}
$(\U_K,\bas_{\U_K})$ is an $\RI_K$-universal rational $K$-based Banach space.
\end{lemma}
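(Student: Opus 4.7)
The plan is to verify two things: (i) that $(\U_K,\bas_{\U_K})$ is a rational $K$-based Banach space, and (ii) that it has the $\RI_K$-universal extension property. For (i), each element of $\bas_{\U_K}=\bigcup_n \bas_{U_n}$ has norm one because the inclusions $U_n\hookrightarrow U_{n+1}$ are isometries. The coordinate functionals $\mathbf e^*_b$ defined on $\bigcup_n U_n$ are uniformly bounded (thanks to the $K$-unconditional constant being preserved along the chain), so they extend to continuous functionals on $\U_K$. For each finite sign pattern $\pm:\bas_{\U_K}\to\{-1,1\}$ and each $x\in \bigcup_n U_n$, the sign-change operator has norm $\le K$; by density this bound propagates to $\U_K$, so every $x\in\U_K$ admits an unconditionally convergent expansion $x=\sum_{b\in\bas_{\U_K}}\mathbf e^*_b(x)\cdot b$, and the resulting sign-change operators have norm $\le K$. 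Rationality follows because any finite $F\subset\bas_{\U_K}$ is contained in some $\bas_{U_n}$, and $U_n$ is rational.

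For (ii), let $A$ be a rational finite-dimensional $K$-based Banach space, $\Lambda$ a based subspace of $A$, and $f:\Lambda\to\U_K$ an isometry $\catB$-morphism. Since $\bas_\Lambda$ is finite and $f(\bas_\Lambda)\subset \bas_{\U_K}=\bigcup_n \bas_{U_n}$ with $\bas_{U_n}\subset\bas_{U_{n+1}}$, there exists $n\in\w$ such that $f(\Lambda)\subset U_n$; regard $f$ as an $\RI_K$-morphism $\Lambda\to U_n$. Apply the Amalgamation Lemma~\ref{lemat} to the inclusion $\Lambda\hookrightarrow A$ and to $f:\Lambda\to U_n$: this yields a rational finite-dimensional $K$-based Banach space $W$ together with $\BI_K$-morphisms $i':A\to W$ and $j':U_n\to W$ satisfying $j'\circ f=i'|_\Lambda$. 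Now invoke the Fra\"iss\'e property of $(U_n)_{n\in\w}$ applied to the $\RI_K$-morphism $j':U_n\to W$: there exist $m>n$ and an $\RI_K$-morphism $g:W\to U_m$ such that $g\circ j'$ is the identity inclusion of $U_n$ into $U_m$. Composing, the map $\bar f:=g\circ i':A\to U_m\subset\U_K$ is an isometry morphism that extends $f$, because for $x\in\Lambda$ we have $\bar f(x)=g(i'(x))=g(j'(f(x)))=f(x)$.

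The main obstacle is step (i): ensuring that the countable set $\bas_{\U_K}$ really is a normalized $K$-unconditional basis of the completion $\U_K$, since a priori we only know this for vectors in the dense subspace $\bigcup_n U_n$. The key point is that the uniform bound $K$ on the sign-change operators defined on each $U_n$ extends by continuity to $\U_K$, and this in turn forces the expansion of each $x\in\U_K$ with respect to $\bas_{\U_K}$ to converge unconditionally; otherwise, both step (ii) and the verification of the defining properties of a $K$-based Banach space proceed by routine amalgamate-then-absorb arguments standard in the Fra\"iss\'e framework of~\cite{kubis} and~\cite{taras}.
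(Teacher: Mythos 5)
Your proposal is correct and follows essentially the same route as the paper's proof (which is deferred to Lemma~4.4 of \cite{taras}): one checks that the uniform bound $K$ on the sign-change operators passes from the dense union $\bigcup_{n\in\w}U_n$ to the completion $\U_K$, and then obtains the extension property by the standard amalgamate-and-absorb argument, locating $f(\Lambda)$ in some $U_n$, applying the Amalgamation Lemma~\ref{lemat} to the inclusion $\Lambda\hookrightarrow A$ and $f:\Lambda\to U_n$, and absorbing the resulting $W$ back into some $U_m$ via the Fra\"iss\'e property. No gaps; the argument matches the intended one.
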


To shorten notation, the $\RI_K$-universal rational $K$-based Banach space $(\U_K,\bas_{\U_K})$ will be denoted by $\U_K$. The following theorem  shows that such space is unique up to $\BI$-isomorphism.

\begin{theorem}
Any $\RI_K$-universal rational $K$-based Banach spaces $X$, $Y$ are $\BI$-isomorphic, which means that there exists a linear bijective isometry $X\to Y$ preserving the bases of $X$ and $Y$.
\end{theorem}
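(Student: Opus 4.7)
The approach is the standard back-and-forth argument from Fra\"iss\'e theory, here driven by the $\RI_K$-universality of both endpoints and the amalgamation tools already in hand. The plan is to enumerate the two bases, build matching increasing chains of finite-dimensional rational $K$-based subspaces inside $X$ and $Y$ that are in basis-preserving isometric correspondence, exhaust the two bases in the limit, and complete by uniform continuity.

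First I enumerate $\bas_X=\{b_n:n\in\w\}$ and $\bas_Y=\{c_n:n\in\w\}$. I will construct recursively finite-dimensional rational $K$-based subspaces $X_0\subset X_1\subset\cdots$ in $X$ and $Y_0\subset Y_1\subset\cdots$ in $Y$, together with basis-preserving linear isometries $f_n\colon X_n\to Y_n$ satisfying $f_{n+1}|_{X_n}=f_n$, $b_n\in X_{2n+1}$, and $c_n\in Y_{2n+2}$, starting from $X_0=Y_0=\{0\}$.

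For the odd step, suppose $f_{2n}\colon X_{2n}\to Y_{2n}$ is already built. Let $X'$ be the linear span of $\bas_{X_{2n}}\cup\{b_n\}$ in $X$; the based-subspace axioms give $\bas_{X'}=\bas_{X_{2n}}\cup\{b_n\}$, and rationality of $X$ makes $X'$ a finite-dimensional rational $K$-based Banach space containing $X_{2n}$ as a based subspace. Viewing $f_{2n}$ as an isometry morphism from the based subspace $X_{2n}$ of $X'$ into the $\RI_K$-universal space $Y$, Definition~\ref{def1} provides an extension $\bar f\colon X'\to Y$ that is a $\BI_K$-morphism. I set $X_{2n+1}=X'$, $f_{2n+1}=\bar f$, and $Y_{2n+1}=\bar f(X')$; the latter is a based subspace of the rational space $Y$, hence again rational finite-dimensional $K$-based. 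The even step is completely symmetric: using $\RI_K$-universality of $X$, I extend the basis-preserving isometry $f_{2n+1}^{-1}\colon Y_{2n+1}\to X$ along the inclusion $Y_{2n+1}\hookrightarrow Y'$, where $Y'$ is the based subspace of $Y$ spanned by $\bas_{Y_{2n+1}}\cup\{c_n\}$, and invert to obtain $f_{2n+2}$.

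In the limit $X_\infty:=\bigcup_n X_n$ contains $\bas_X$ and hence is dense in $X$; likewise $Y_\infty:=\bigcup_n Y_n$ is dense in $Y$. The union $f_\infty:=\bigcup_n f_n\colon X_\infty\to Y_\infty$ is a well-defined basis-preserving linear bijection, and it is an isometry because each $f_n$ is. Uniform continuity extends $f_\infty$ to a linear isometry $\hat f\colon X\to Y$. Surjectivity follows because $\hat f(X)$ is closed (being a complete isometric image) and contains $\bas_Y=\bigcup_n\bas_{Y_n}$, whose linear span is dense in $Y$. Since $\hat f$ maps $\bas_X$ bijectively onto $\bas_Y$, it is a $\BI$-isomorphism.

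The only point that needs care — and is really bookkeeping rather than a genuine obstacle — is verifying at each recursive step that the enlarged space $X'$ (respectively $Y'$) truly lies in the category $\RI_K$ and that $X_{2n}\hookrightarrow X'$ is the inclusion of a based subspace, so that the hypothesis of Definition~\ref{def1} applies. Both facts rest precisely on the \emph{rationality} of the ambient spaces $X$ and $Y$ and on the definition of a based subspace as one whose basis is the trace of the ambient basis; without these hypotheses the back-and-forth would break at the very first move.
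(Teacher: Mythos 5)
Your proposal is correct and follows essentially the same route as the paper, which simply defers to Theorem 4.5 of the cited companion paper: that proof is exactly this back-and-forth construction, alternately applying the $\RI_K$-universality of $Y$ and of $X$ to one-basis-vector enlargements and passing to the completion of the union. The bookkeeping points you flag (that the span of finitely many basic vectors in a rational $K$-based space is again a rational finite-dimensional $K$-based space, and that the image of a basis under a $\BI$-morphism is the trace of the ambient basis on the image) are indeed the only things to verify, and they hold as you indicate.
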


This theorem can be proved by analogy with Theorem 4.5 of \cite{taras}.

\section{Almost $\FI_K$-universality}

By analogy with the $\RI_K$-universal based Banach space, one can try to introduce an $\FI_K$-universal based Banach space. However such notion is vacuous as each based Banach space has only countably many finite-dimensional based subspaces whereas the category $\FI_K$ contains continuum many pairwise non-$\BI$-isomorphic  2-dimensional based Banach spaces. One of possibilities to overcome this problem is to introduce almost $\FI_K$-universal based Banach spaces with the help of $\eps$-isometries.

A linear operator $f$ between Banach spaces $X$ and $Y$ is called an \emph{$\eps$-isometry} for a positive real number $\eps$, if
$$ (1+\eps )^{-1} \cdot \|x\|_X <  \|f(x)\|_Y < (1+\eps ) \cdot \|x\|_X$$
for every $x\in X\backslash \{0\}$. This definition implies that each $\eps$-isometry is an injective linear operator.

A morphism  of the category $\catB$ of based Banach spaces is called an {\em $\eps$-isometry $\catB$-morphism} if it is an $\eps$-isometry of the underlying Banach spaces.

\begin{definition}\label{def2}
A based Banach space $X$ is called \emph{almost $\FI_K$-universal} if for any $\eps>0$ and finite dimensional $K$-based Banach space $A$, any $\eps$-isometry $\catB$-morphism $f:\Lambda\to X$ defined on a based subspace $\Lambda$ of $A$ can be extended to an $\eps$-isometry $\catB$-morphism $\bar f:A\to X $.
\end{definition}

It is clear that each almost $\FI_K$-universal based Banach space is almost $\FI_1$-universal.

The following uniqueness theorem can be proved by analogy with Theorem 5.3 of \cite{taras}.

\begin{theorem}\label{glowne}
Let $X$ and $Y$ be almost $\FI_K$-universal $K$-based Banach spaces and $ \eps >0$. Each $\eps$-isometry $\catB$-morphism $f:X_0\rightarrow Y$ defined on a finite-dimensional based subspace $X_0$ of the $K$-based Banach space $X$ can be extended to an $\eps$-isometry $\catB$-isomorphism $\bar{f}:X \rightarrow Y$.
\end{theorem}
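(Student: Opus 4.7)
The approach is a back-and-forth construction in the spirit of the proof of Theorem~4.5 of \cite{taras}, using almost $\FI_K$-universality of $X$ and $Y$ alternately to extend $f$ one finite-dimensional step at a time. A preliminary reduction handles the strict inequality in the definition of $\varepsilon$-isometry: because $X_0$ is finite-dimensional and $f:X_0\to Y$ satisfies $(1+\varepsilon)^{-1}\|x\|_X<\|f(x)\|_Y<(1+\varepsilon)\|x\|_X$ on the compact unit sphere of $X_0$, there exists $\varepsilon'\in(0,\varepsilon)$ for which $f$ is already an $\varepsilon'$-isometry. I will carry out the construction at level $\varepsilon'$; any non-strict $\varepsilon'$-isometry produced in the limit will automatically be a strict $\varepsilon$-isometry.

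Enumerate $\bas_X=\{b^X_n\}_{n\in\omega}$ and $\bas_Y=\{b^Y_n\}_{n\in\omega}$. Set $Y_0:=f(X_0)$ and $f_0:=f$, and inductively construct chains of finite-dimensional based subspaces $X_0\subseteq X_1\subseteq\cdots$ in $X$ and $Y_0\subseteq Y_1\subseteq\cdots$ in $Y$, together with $\varepsilon'$-isometry $\catB$-isomorphisms $f_n:X_n\to Y_n$ with $f_{n+1}|_{X_n}=f_n$. At an odd step $n=2k+1$, pick the first basis vector $b^X_i\in\bas_X\setminus\bas_{X_{n-1}}$, let $A$ be the based subspace of $X$ whose basis is $\bas_{X_{n-1}}\cup\{b^X_i\}$, and apply Definition~\ref{def2} to the almost $\FI_K$-universal space $Y$ together with the $\varepsilon'$-isometry $f_{n-1}:X_{n-1}\to Y$ and the based inclusion $X_{n-1}\subset A$: this produces an $\varepsilon'$-isometry $\catB$-morphism $f_n:A\to Y$ extending $f_{n-1}$, and one sets $X_n:=A$, $Y_n:=f_n(A)$. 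At an even step $n=2k$, the roles are swapped: the inverse $g_{n-1}:=f_{n-1}^{-1}:Y_{n-1}\to X$ is itself a basis-preserving $\varepsilon'$-isometry, so letting $b^Y_j$ be the first vector in $\bas_Y\setminus\bas_{Y_{n-1}}$ and $B\subset Y$ the based subspace with basis $\bas_{Y_{n-1}}\cup\{b^Y_j\}$, Definition~\ref{def2} applied to the almost $\FI_K$-universal space $X$ extends $g_{n-1}$ to an $\varepsilon'$-isometry $\catB$-morphism $g_n:B\to X$; one then sets $Y_n:=B$, $X_n:=g_n(B)$, and $f_n:=g_n^{-1}$. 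By the minimal-index choices, $\bigcup_n\bas_{X_n}=\bas_X$ and $\bigcup_n\bas_{Y_n}=\bas_Y$.

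Passing to the limit, the coherent family $(f_n)$ combines to an $\varepsilon'$-isometry $\tilde f:\bigcup_n X_n\to\bigcup_n Y_n$ on a dense linear subspace of $X$, so $\tilde f$ extends uniquely by continuity to a bounded linear $\bar f:X\to Y$ with $(1+\varepsilon')^{-1}\|x\|_X\le\|\bar f(x)\|_Y\le(1+\varepsilon')\|x\|_X$; since $\varepsilon'<\varepsilon$, this yields the strict $\varepsilon$-isometry inequality for every $x\ne 0$. The image $\bar f(X)$ is closed in $Y$ and contains every $b^Y_j$ (each lying in some $Y_n\subseteq\bar f(X)$), hence contains a dense subspace of $Y$ and thus equals $Y$; so $\bar f$ is an $\varepsilon$-isometry $\catB$-isomorphism extending $f$. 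The main technical obstacle is the strictness of the $\varepsilon$-isometry bound under completion, which the opening reduction to $\varepsilon'<\varepsilon$ via compactness of the unit sphere of $X_0$ is precisely designed to dissolve.
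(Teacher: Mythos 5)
Your proposal is correct and follows essentially the route the paper intends: the paper gives no proof of Theorem~\ref{glowne} but defers to the analogous Theorem~5.3 of \cite{taras}, which is exactly this back-and-forth construction alternating the almost $\FI_K$-universality of $X$ and $Y$ over enumerations of the two bases. Your preliminary reduction to some $\eps'<\eps$ via compactness of the unit sphere of $X_0$ correctly handles the only delicate point (strictness of the $\eps$-isometry bounds surviving the passage to the completion), and it works here because Definition~\ref{def2} extends an $\eps'$-isometry to an $\eps'$-isometry with the \emph{same} constant, so no shrinking sequence of tolerances is needed.
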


\begin{corollary} For any almost $\FI_K$-universal $K$-based Banach spaces
 $X$ and $Y$ and any $ \eps >0$ there exists an $\eps$-isometry $\catB$-isomorphism $f:X \rightarrow Y$.
\end{corollary}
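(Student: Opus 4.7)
The plan is to deduce the corollary as an immediate application of Theorem~\ref{glowne} by feeding it a trivial starting datum. Concretely, I would take $X_0$ to be the zero subspace $\{0\}$ of $X$, regarded as a based subspace with empty basis $\bas_{X_0}=\varnothing\subseteq\bas_X$, and let $f:X_0\to Y$ be the zero map $0\mapsto 0$.

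First I would verify that this $f$ is a legitimate input to Theorem~\ref{glowne}. Since $\bas_{X_0}=\varnothing$, the condition $f(\bas_{X_0})\subseteq\bas_Y$ holds vacuously, so $f$ is a $\catB$-morphism. The $\eps$-isometry inequalities $(1+\eps)^{-1}\|x\|_X<\|f(x)\|_Y<(1+\eps)\|x\|_X$ are required only for $x\in X_0\setminus\{0\}=\varnothing$, so $f$ is vacuously an $\eps$-isometry. Thus $f$ is an $\eps$-isometry $\catB$-morphism defined on a finite-dimensional based subspace of $X$.

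Next, I would apply Theorem~\ref{glowne} directly: since both $X$ and $Y$ are almost $\FI_K$-universal $K$-based Banach spaces and $\eps>0$, the theorem provides an extension $\bar f:X\to Y$ which is an $\eps$-isometry $\catB$-isomorphism. This $\bar f$ is the desired map, completing the proof.

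There is essentially no obstacle here; the content of the corollary is entirely absorbed by Theorem~\ref{glowne}, and the only point worth checking is the (trivial) admissibility of the zero starting map. If one prefers a non-empty starting configuration, an equally quick alternative is to pick a single basis vector $b\in\bas_X$, choose any $b'\in\bas_Y$, and apply Theorem~\ref{glowne} to the linear map $X_0=\mathrm{span}\{b\}\to Y$, $b\mapsto b'$, which is an isometry (hence an $\eps$-isometry) $\catB$-morphism; both routes give the result in one line.
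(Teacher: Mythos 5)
Your proof is correct and is exactly the intended derivation: the paper states this corollary without proof as an immediate consequence of Theorem~\ref{glowne}, obtained by feeding that theorem a trivial starting morphism on a finite-dimensional based subspace. Both of your starting configurations (the zero subspace with empty basis, or the span of a single basis vector mapped to a basis vector of $Y$) are admissible, and the single-basis-vector variant sidesteps any quibble about whether $\{0\}$ counts as a based subspace.
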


The following universality theorem can be proved by analogy with Theorem 5.5 of \cite{taras}.

\begin{theorem}\label{wazne}  Let $U$ be an almost $\FI_K$-universal based Banach space. For any $\eps>0$ and any $K$-based Banach space $X$ there exists  an $\eps$-isometry $\catB$-morphism $f:X\to U$.
\end{theorem}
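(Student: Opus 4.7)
The plan is to build the desired $\eps$-isometry $\catB$-morphism $\bar f:X\to U$ as the continuous extension of a chain of $\catB$-morphisms produced by iterated application of the almost $\FI_K$-universal property of $U$. First, fix any enumeration $\bas_X=\{\mathbf e_n\}_{n\in\w}$ of the basis of $X$ and set $X_n:=\mathrm{span}\{\mathbf e_0,\dots,\mathbf e_{n-1}\}$. Each $X_n$ is finite-dimensional, inherits a $K$-unconditional basis from $X$, and sits in $X_{n+1}$ as a based subspace (since $\bas_{X_n}=\bas_{X_{n+1}}\cap X_n$), while $\bigcup_{n\in\w} X_n$ is dense in $X$ as $\bas_X$ is a Schauder basis. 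I also fix an auxiliary $\eps'\in(0,\eps)$ to leave a cushion, since extension by continuity only preserves non-strict inequalities.

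Next I define $\catB$-morphisms $f_n:X_n\to U$ by induction on $n$ so that each $f_n$ is an $\eps'$-isometry $\catB$-morphism and $f_{n+1}|_{X_n}=f_n$. The base case sets $f_0:\{0\}\to U$ to the zero map, which is vacuously an $\eps'$-isometry $\catB$-morphism. For the inductive step I apply Definition~\ref{def2} to $f_n$, viewed as an $\eps'$-isometry $\catB$-morphism on the based subspace $X_n$ of the finite-dimensional $K$-based Banach space $X_{n+1}$, obtaining an extension $f_{n+1}:X_{n+1}\to U$ with the same distortion constant $\eps'$.

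The $f_n$'s assemble into a single linear $\eps'$-isometry $f:\bigcup_{n\in\w} X_n\to U$ that sends $\mathbf e_n$ into $\bas_U$ for every $n$. Since $\|f(x)\|_U\le(1+\eps')\|x\|_X$ on a dense subspace, $f$ extends uniquely to a bounded linear operator $\bar f:X\to U$, and passing to the limit transfers the two-sided bound to $(1+\eps')^{-1}\|x\|_X\le\|\bar f(x)\|_U\le(1+\eps')\|x\|_X$ for every $x\in X$. Because $\eps'<\eps$, these non-strict $\eps'$-bounds imply the strict $\eps$-bounds demanded by the definition of an $\eps$-isometry at every nonzero $x$, and $\bar f(\mathbf e_n)=f_{n+1}(\mathbf e_n)\in\bas_U$ shows that $\bar f$ is a $\catB$-morphism. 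The only delicate point is precisely this strictness issue after passing to the closure, which the $\eps'<\eps$ cushion handles cleanly; everything else is a routine iteration of Definition~\ref{def2}.
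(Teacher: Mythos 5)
Your proof is correct and follows the standard route that the paper itself invokes (it refers the reader to the analogous Theorem 5.5 of \cite{taras}): exhaust $X$ by the chain of finite-dimensional based subspaces $X_n$, iterate Definition~\ref{def2} to get compatible $\eps'$-isometry $\catB$-morphisms with a uniform distortion constant, and pass to the closure, with the $\eps'<\eps$ cushion correctly handling the loss of strictness in the limit.
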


\begin{corollary}\label{przenor} Each almost $\FI_1$-universal based Banach space $U$ is $\catB$-universal.
\end{corollary}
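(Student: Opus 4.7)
The plan is to reduce the statement to the $1$-based case by renorming and then apply Theorem~\ref{wazne}.

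Given any based Banach space $(X,\bas_X)$, by definition it is $K$-based for some $K\ge 1$. I would introduce on $X$ the equivalent norm
$$\|x\|':=\sup_{\pm\in\{-1,1\}^{\bas_X}}\|T_\pm x\|_X,$$
which satisfies $\|x\|_X\le\|x\|'\le K\|x\|_X$. A routine check shows that each $b\in\bas_X$ still has $\|b\|'=1$, each sign operator $T_\sigma$ is a $\|\cdot\|'$-isometry (a composition of sign operators is again a sign operator), and the expansion $x=\sum_{b\in\bas_X}\mathbf{e}_b^*(x)\,b$ still converges in $\|\cdot\|'$ by equivalence of norms. Hence $(X,\|\cdot\|',\bas_X)$ is a $1$-based Banach space built on the same underlying vector space and the same basis set as $(X,\bas_X)$.

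Since $U$ is almost $\FI_1$-universal, Theorem~\ref{wazne} applied to $(X,\|\cdot\|')$ with any $\eps>0$ (say $\eps=1$) produces a $\catB$-morphism $f\colon(X,\|\cdot\|')\to U$ that is a $1$-isometry and sends $\bas_X$ into $\bas_U$. Combined with the equivalence of $\|\cdot\|_X$ and $\|\cdot\|'$, this $f$ is a bi-continuous linear injection of $(X,\|\cdot\|_X)$ into $U$ that preserves basis vectors. To finish, set $V:=f(X)$ and verify that $V$ is a based subspace of $U$ in the sense introduced in Section~2. Because $f$ is continuous and $\bas_X$ spans $X$ densely, $V$ is the closed linear span of $f(\bas_X)\subset\bas_U$; for any $u\in V\cap\bas_U$, continuity of the coordinate functionals of the Schauder basis $\bas_U$ forces the $\bas_U$-expansion of $u$ to be supported in $f(\bas_X)$, and uniqueness of that expansion then gives $u\in f(\bas_X)$. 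Thus $\bas_V:=f(\bas_X)=V\cap\bas_U$ and this set is a normalized unconditional basis for $V$ (inheriting its unconditional constant from $\bas_U$). So $V$ is a based subspace of $U$, and $f\colon X\to V$ is a $\catB$-isomorphism, which by Definition~\ref{d:uB} exhibits $U$ as $\catB$-universal.

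The only genuinely nontrivial ingredient is the renorming step, which promotes an arbitrary $K$-based structure to a $1$-based structure on the same basis set; once this is done, Theorem~\ref{wazne} supplies the topological embedding and identifying the image as a based subspace is purely formal.
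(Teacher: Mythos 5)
Your proposal is correct and takes essentially the same route as the paper: the renorming $\|x\|'=\sup_{\pm}\|T_\pm x\|$ is exactly the paper's $\|x\|_1=\sup_{F\subset\bas_X}\|T_F(x)\|$ (a subset $F$ encodes the sign pattern), and both proofs then invoke Theorem~\ref{wazne} for the resulting $1$-based space and identify the image as a based subspace. Your extra verification that $f(\bas_X)=f(X)\cap\bas_U$ just fills in a detail the paper leaves implicit.
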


\begin{proof} Given a based Banach space $X$, we need to prove that $X$ is $\catB$-isomorphic to a based subspace of $U$. For every $F\subset \bas_{X}$ consider the linear operator $T_F:X\to X$ defined by the formula $T_F(\sum _{b\in \bas_{X}} x_b \cdot b)=\sum _{b\in F} x_b \cdot b - \sum _{b\in \bas_X\setminus F} x_b \cdot b$ for $(x_b)_{b\in\bas_X}\in\mathbb R^{\bas_X}$. Denote by $X_1$ the based Banach space $X$ endowed with the equivalent norm
$$\|x\|_1:=\sup_{F\subset \bas_X}\|T_F(x)\|.$$

It is easy to check that $X_1$ is a 1-based Banach space. By Theorem~\ref{wazne}, for $\eps=\frac12$ there exists an $\eps$-isometry $\catB$-morphism $f:X_1\to U$. Then $f$ is a $\catB$-isomorphism between $X$ and the based subspace $f(X)=f(X_1)$ of the based Banach space $U$.
\end{proof}

Corollary~\ref{przenor} combined with the Uniqueness Theorem \ref{t:up} of Pe\l czy\'nski implies

\begin{corollary}\label{kun} Each almost $\FI_1$-universal based Banach space  is $\catB$-isomorphic to the $\catB$-universal space $\U$ of Pe\l czy\'nski.
\end{corollary}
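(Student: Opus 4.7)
The plan is to observe that this corollary is essentially an immediate consequence of the two results it explicitly names, so there is no substantive work left to do; the proof is a one-line composition.

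First I would invoke Corollary~\ref{przenor}: if $U$ is any almost $\FI_1$-universal based Banach space, then $U$ is $\catB$-universal. Second, recall from the discussion preceding Theorem~\ref{t:up} that Pe\l czy\'nski's space $\U$ from \cite{pelbases} is itself $\catB$-universal. Finally, apply Theorem~\ref{t:up} (Pe\l czy\'nski's Uniqueness Theorem) to the two $\catB$-universal based Banach spaces $U$ and $\U$ to conclude that they are $\catB$-isomorphic, i.e.\ there is a linear topological isomorphism $U\to\U$ sending the basis of $U$ bijectively onto the basis of $\U$.

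There is no real obstacle here: the content has been pushed into Corollary~\ref{przenor} (where the key trick was to renorm $X$ via $\|x\|_1=\sup_{F\subset\bas_X}\|T_F(x)\|$ to get a $1$-based space and then apply Theorem~\ref{wazne}) and into the cited uniqueness theorem of Pe\l czy\'nski. So the only thing to check when writing out the proof is that the hypothesis ``almost $\FI_1$-universal'' is exactly what Corollary~\ref{przenor} consumes, which it is by definition, and that $\catB$-isomorphism in the sense of Definition~\ref{d:uB} matches the notion used in Theorem~\ref{t:up}, which it does since morphisms in $\catB$ are precisely the basis-preserving continuous linear operators.
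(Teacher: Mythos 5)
Your proposal matches the paper exactly: the paper derives Corollary~\ref{kun} precisely by combining Corollary~\ref{przenor} (which makes $U$ $\catB$-universal) with Pe\l czy\'nski's Uniqueness Theorem~\ref{t:up} applied to the two $\catB$-universal spaces $U$ and $\U$. Your extra checks that the hypotheses and the notion of $\catB$-isomorphism line up are correct and add nothing beyond what the paper leaves implicit.
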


Now we discuss the problem of existence of almost $\FI_K$-universal based Banach spaces. To our surprise we discovered that such spaces exist only for $K=1$. In the proof of this non-existence result we shall use the following example.

\begin{example}\label{ex} For any positive rational numbers $\eta<\e<\delta$ with $\e\le 1$ there exists a rational 3-dimensional $(1+2\delta)$-based Banach space $(A,\|\cdot\|_A)$ with basis $\{e_1,e_2,e_3\}$ and a $1$-unconditional norm $\|\cdot\|'_\Lambda$ on the based subspace $\Lambda\subset A$ spanned by the vectors $e_1,e_2$ such that 
\begin{enumerate}
\item $\|x_1e_1+x_2e_2\|_A=\max\{|x_1|,|x_2|\}$ for any $x_1,x_2\in\IR$;
\item $\|x_1e_1+x_2e_2\|'_\Lambda=\max\{|x_1|,|x_2|,\frac{1+\e}2(|x_1|+|x_2|)\}$ for any $x_1,x_2\in\IR$;
\item the point $a:=(1+\delta)(e_1+e_2)-\delta e_3$ has norm $\|a\|_A=\|e_3\|_A=1$;
\item for any norm $\|\cdot\|'_A$ on $A$ with $\|e_3\|'_A=1$ and  $\|e_1+e_2\|'_A>\frac1{1+\eta}\|e_1+e_2\|'_\Lambda$, the point $a=(1+\delta)(e_1+e_2)-\delta e_3$ has norm $\|a\|'_A\ge (1+\delta)(\tfrac{1+\e}{1+\eta}-\tfrac{\delta}{1+\delta}\big)$.
\end{enumerate}
\end{example}

\begin{proof} Let $A$ be a 3-dimensional linear space with basis $\{e_1,e_2,e_3\}$. Given any positive rational numbers $\eta<\e<\delta$ with $\e\le 1$, consider the point $$a:=(1+\delta)\cdot (e_1+e_2)-\delta e_3\in A$$ and observe that $\frac1{1+\delta}a+\frac{\delta}{1+\delta}e_3=e_1+e_2$. Let $P=\{e_1,e_2,e_3,e_1-e_2,a\}$ and $B$ be the convex hull of the set $P\cup(-P)$. The convex symmetric set $B$ coincides with the closed unit ball of some norm $\|\cdot\|_A$ on $A$. Let us show that $(A,\|\cdot\|_A)$ is a $(1+2\delta)$-based Banach space.

First we check that $\|e_i\|_A=1$ for $i\in\{1,2,3\}$. The inequality $\|e_i\|_A\le 1$ follows from the inclusion $e_i\in P\subset B$. To show that $\|e_i\|_A=1$, it suffices to find a linear functional $f_i:A\to\IR$ such that $f_i(e_i)=1=\max f_i(B)$. Let $e_1^*,e_2^*,e^*_3$ be the coordinate functionals of the basis $e_1,e_2,e_3$.
If $i\in\{1,2\}$, then consider the functional $f_i=e_i^*+e^*_3$. For this functional, we get $f_i(e_i)=f_i(e_3)=1$ and $f_i(P\cup(-P))\subset \{-1,0,1\}$ and hence $f_i(B)\subset[-1,1]$, which implies $\|e_i\|_A=\|e_3\|_A=1$. 

Since $a\in P\subset B$, the element $a$ has norm $\|a\|_A\le 1$. To see that $\|a\|_A=1$, consider the linear functional $f=\frac12(e_1^*+e_2^*)+e_3^*$ and observe that $f(a)=1=\max f(B)$.

It is easy to see that $$B\subset (1+2\delta){\cdot}\conv(\{e_1+e_2,-e_1-e_2,e_1-e_2,e_2-e_1,e_3,-e_3\})\subset (1+2\delta)B,$$ which implies that basis $\{e_1,e_2,e_3\}$ has unconditional constant $\le(1+2\delta)$ in the norm $\|\cdot\|_A$ and means that $(A,\|\cdot\|_A)$ is a $(1+2\delta)$-based Banach space.

Let $\Lambda$ be the based subspace of $A$, spanned by the basic vectors $e_1$ and $e_2$. It is easy to see that $B\cap \Lambda$ coincides with the convex hull of the set $\{e_1+e_2,e_1-e_2,-e_1+e_2,-e_1-e_2\}$, which implies that $\|x_1e_1+x_2e_2\|_A=\max\{|x_1|,|x_2|\}$ for any $x_1e_1+x_2e_2\in\Lambda$.

Now consider the norm $\|\cdot\|'_\Lambda$ on $\Lambda$ defined by $$\|x_1e_1+x_2e_2\|'_\Lambda=\max\{|x_1|,|x_2|,\tfrac{1+\e}2(|x_1|+|x_2|)\}.$$ Observe that the Banach space $(\Lambda,\|\cdot\|'_\Lambda)$ endowed with the base $\{e_1,e_2\}$ is a $1$-based Banach space and $\|x\|_A\le\|x\|_\Lambda'\le (1+\e)\|x\|_A$ for any $x\in\Lambda$.

Now take any norm $\|\cdot\|'_A$ on $A$ such that $\|e_3\|'_A=1$ and $$\|e_1+e_2\|'_A>\tfrac1{1+\eta}\|e_1+e_2\|'_\Lambda=\tfrac{1+\e}{1+\eta}.$$ Then
$$\tfrac{1+\e}{1+\eta}<\|e_1+e_2\|'_A=\|\tfrac1{1+\delta}a+\tfrac{\delta}{1+\delta}e_3\|'_A\le \tfrac1{1+\delta}\|a\|'_A+\tfrac{\delta}{1+\delta}\|e_3\|'_A=\tfrac1{1+\delta}\|a\|'_A+\tfrac{\delta}{1+\delta}$$and hence
$$
\begin{aligned}
\|a\|'_A&\ge (1+\delta)(\tfrac{1+\e}{1+\eta}-\tfrac{\delta}{1+\delta}\big).
\end{aligned}
$$
\end{proof}

\begin{proposition}\label{p:non} No based Banach space is almost $\FI_K$-universal for $K>1$.
\end{proposition}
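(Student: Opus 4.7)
The plan is to assume, toward a contradiction, that $U$ is almost $\FI_K$-universal for some $K>1$, and to exploit the rigidity supplied by Example~\ref{ex}. I first fix rational numbers $\delta,\e$ with $0<\e<\delta\leq 1$ and $1+2\delta\leq K$, leaving a rational $\eta\in(0,\e)$ to be pinned down at the end. Example~\ref{ex} then produces a rational $(1+2\delta)$-based (hence $K$-based) Banach space $(A,\|\cdot\|_A)$ with basis $\{e_1,e_2,e_3\}$, subspace $\Lambda=\operatorname{span}\{e_1,e_2\}$, and an auxiliary $1$-unconditional norm $\|\cdot\|'_\Lambda$ on $\Lambda$; by item (3) the vector $a=(1+\delta)(e_1+e_2)-\delta e_3$ has norm $\|a\|_A=1$.

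Since $(\Lambda,\|\cdot\|'_\Lambda)$ is $1$-based, hence $K$-based, Theorem~\ref{wazne} furnishes an $\eta$-isometry $\catB$-morphism $g\colon(\Lambda,\|\cdot\|'_\Lambda)\to U$. The key reinterpretation is that by items (1) and (2) the restriction of $\|\cdot\|_A$ to $\Lambda$ is the max norm $\|\cdot\|_\Lambda$, and $\|x\|_\Lambda\leq\|x\|'_\Lambda\leq(1+\e)\|x\|_\Lambda$, so the same map $g$ viewed from the based subspace $(\Lambda,\|\cdot\|_\Lambda)\subset A$ into $U$ becomes an $\eps_*$-isometry with $\eps_*:=(1+\eta)(1+\e)-1$. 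Almost $\FI_K$-universality then lets me extend $g$ to an $\eps_*$-isometry $\catB$-morphism $\bar g\colon A\to U$.

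The next step plays two estimates of $\|\bar g(a)\|_U$ against each other. Setting $b_i:=\bar g(e_i)\in\bas_U$ and defining the pullback norm $\|x\|'_A:=\|\bar g(x)\|_U$ on $A$ (well-defined because $\bar g$ is injective), I have $\|e_3\|'_A=1$, while the lower $\eta$-isometry bound on $g$ yields
\[
\|e_1+e_2\|'_A=\|g(e_1+e_2)\|_U>\frac{\|e_1+e_2\|'_\Lambda}{1+\eta}=\frac{1+\e}{1+\eta}.
\]
Item (4) of Example~\ref{ex} therefore forces $\|\bar g(a)\|_U\geq(1+\delta)\tfrac{1+\e}{1+\eta}-\delta$. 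On the other hand, because $\bar g$ is an $\eps_*$-isometry with $\|a\|_A=1$, the upper bound $\|\bar g(a)\|_U<1+\eps_*=(1+\eta)(1+\e)$ holds.

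Finally, a short rearrangement shows that the two bounds are inconsistent exactly when $(1+\e)(\delta-2\eta-\eta^2)>\delta(1+\eta)$, and as $\eta\to 0^+$ with $\e,\delta$ fixed the difference of the two sides tends to $\e\delta>0$; hence any sufficiently small rational $\eta\in(0,\e)$ completes the contradiction. The main obstacle I anticipate is the double role the single map $g$ must play: it must be almost isometric with respect to $\|\cdot\|'_\Lambda$ to produce the strong lower bound $\|b_1+b_2\|_U>(1+\e)/(1+\eta)$ needed to trigger item (4), while simultaneously having small enough distortion with respect to the subspace norm $\|\cdot\|_\Lambda$ to admit an extension via almost $\FI_K$-universality. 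Reconciling these demands is exactly what the gap between $(1+\eta)(1+\e)$ and $(1+\delta)(1+\e)/(1+\eta)-\delta$ measures, and this gap collapses only in the degenerate case $K=1$, explaining why the argument breaks precisely there.
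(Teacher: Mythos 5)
Your proof is correct and follows essentially the same route as the paper's: both hinge on Example~\ref{ex}, obtain an $\eta$-isometry with respect to the auxiliary $1$-unconditional norm $\|\cdot\|'_\Lambda$, reinterpret it as a larger-distortion $\eps$-isometry with respect to the max norm on $\Lambda$ so that almost $\FI_K$-universality yields an extension to all of $A$, and then play the lower bound from Example~\ref{ex}(4) against the upper distortion bound at the point $a$. The only difference is cosmetic parameter bookkeeping: you send $\eta\to0^+$ at the end rather than fixing all the inequalities up front, and your upper bound $(1+\eta)(1+\e)$ is in fact slightly tighter than the paper's $(1+\delta)$.
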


\begin{proof} Assume that some based Banach space $(U,\|\cdot\|_U)$ is almost $\FI_K$-universal for some $K>1$. Choose any positive rational numbers $\eta<\e<\delta$ such that $$1+2\delta\le K,\;\;\delta\le 1,\;\;(1+\eta)(1+\e)\le(1+\delta)\;\;\mbox{and}\;\;\frac{1+\e}{1+\eta}>1+\frac{\delta}{1+\delta}.$$ Let $(A,\|\cdot\|_A)$ be the 3-dimensional $(1+2\delta)$-based Banach space with basis $\{e_1,e_2,e_3\}$, constructed in Example~\ref{ex}. Let $\Lambda$ be the based subspace of $A$ spanned by the vectors $e_1,e_2$. By Example~\ref{ex}(1), $\|x_1e_1+x_2e_2\|_A=\max\{|x_1|,|x_2|\}$ for any $x_1,x_2\in\IR$. Let  $\|\cdot\|_\Lambda'$ be the norm on $\Lambda\subset A$ defined by
$$\|x_1e_1+x_2e_2\|_\Lambda'=\max\{|x_1|,|x_2|,\tfrac{1+\e}2(|x_1|+|x_2|)\}.$$
It is clear that $$\|x\|_A\le \|x\|_\Lambda'\le(1+\e)\|x\|_A$$for any $x\in\Lambda$.

The definition of the norm $\|\cdot\|_\Lambda'$ implies that the based Banach space $(\Lambda,\|\cdot\|'_\Lambda)$ is $1$-based.
Since $U$ is almost $\FI_K$-universal, there exists an $\eta$-isometry $f:\Lambda\to U$ of the $1$-based Banach space $(\Lambda,\|\cdot\|_\Lambda')$ into the based Banach space $(U,\|\cdot\|_U)$.

 Taking into account that $f$ is an $\eta$-isometry, we conclude that
$$\tfrac1{1+\eta}\|x\|_A\le \tfrac1{1+\eta}\|x\|'_\Lambda<\|f(x)\|_U<(1+\eta)\|x\|'_\Lambda\le (1+\eta)(1+\e)\|x\|_A\le (1+\delta)\|x\|_A$$ for any $x\in \Lambda\setminus\{0\}$. 
This means that $f:\Lambda\to U$ is a $\delta$-isometry with respect to the norm  on $\Lambda$ induced by the norm $\|\cdot\|_A$. Since $(A,\|\cdot\|_A)$ is a $(1+2\delta)$-based Banach space, $(1+2\delta)\le K$, and the based Banach space $U$ is almost $\FI_K$-universal, the $\delta$-isometry $f$ extends to a $\delta$-isometry $\catB$-morphism $\bar f:A\to U$. The norm of the based Banach space $U$ induces the norm $\|\cdot\|_A'$ on $A$ defined by $\|x\|'_A=\|\bar f(x)\|_U$. It follows from $\bar f(e_3)\in \bar  f(\bas_A)\subset\bas_U$ that $\|e_3\|'_A=\|\bar f(e_3)\|_U=1$.

Taking into account that $f:(\Lambda,\|\cdot\|'_\Lambda)\to(U,\|\cdot\|_U)$ is an $\eta$-isometry, we conclude that
$$\|e_1+e_2\|'_A=\|f(e_1+e_2)\|_U>\tfrac1{1+\eta}\|e_1+e_2\|'_\Lambda.$$Then the conditions (3) and (4) of Example~\ref{ex} ensure that the point\newline $a=(1+\delta)\cdot(e_1+e_2)-\delta e_3$ has norms $\|a\|_A=1$ and
$$\|\bar f(a)\|_U=\|a\|_A'\ge (1+\delta)(\tfrac{1+\e}{1+\eta}-\tfrac{\delta}{1+\delta})>(1+\delta)\|a\|_A,$$
which means that $\bar f$ is not a $\delta$-isometry. So, $U$ is not almost $\FI_K$-universal. 
\end{proof}

On the other hand, the following theorem ensures that almost $\FI_1$-universal based Banach spaces do exist.  

\begin{theorem}\label{alrat}
Any $\RI_K$-universal rational $K$-based Banach space $X$ is almost $\FI_1$-universal.
\end{theorem}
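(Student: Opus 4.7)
The plan is to reduce the $\eps$-isometric extension problem to the isometric extension problem handled by $\RI_K$-universality, by slightly perturbing the norm on $A$. Concretely, I pull back the norm of $X$ to $\Lambda$ via $f$, extend this pulled-back norm to $A$ as a rational $K$-based norm multiplicatively $(1+\eps)$-close to the original $\|\cdot\|_A$, and then apply Definition~\ref{def1} to get an isometric extension, which will automatically be the desired $\eps$-isometry.

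First, define $\|y\|_f := \|f(y)\|_X$ on $\Lambda$. Then $f$ becomes an isometric $\catB$-morphism from $(\Lambda,\|\cdot\|_f)$ onto the based subspace $f(\Lambda)\subseteq X$, which is a finite-dimensional based subspace of the rational $K$-based space $X$ and hence itself rational $K$-based; so $(\Lambda,\|\cdot\|_f)$ is a rational $K$-based Banach space. The strict $\eps$-isometry inequalities together with compactness of the unit sphere of the finite-dimensional $\Lambda$ produce some $\eps_0\in(0,\eps)$ with $(1+\eps_0)^{-1}B_\Lambda\subseteq B_f\subseteq(1+\eps_0)B_\Lambda$, where $B_\Lambda$ and $B_f$ denote the unit balls of $\|\cdot\|_\Lambda$ and $\|\cdot\|_f$.

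Second, I construct a rational $K$-based norm $\|\cdot\|_*$ on $A$ extending $\|\cdot\|_f$ and $(1+\eps)$-equivalent to $\|\cdot\|_A$. Fix $\delta>0$ with $(1+\eps_0)/(1-\delta)\le 1+\eps$, and choose a rational $1$-unconditional polytope $P\subseteq A$ with $(1-\delta)(1+\eps_0)^{-1}B_A\subseteq P\subseteq(1+\eps_0)^{-1}B_A$; such a $P$ exists by a standard polyhedral approximation of the $1$-unconditional body $B_A$ from inside using the $T_\pm$-orbit of a finite set of rational points. Set $B_*:=\conv\bigl(P\cup B_f\cup(-P)\cup(-B_f)\bigr)$ and let $\|\cdot\|_*$ be its Minkowski gauge. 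The four properties to verify are: (i) $B_*\cap\Lambda=B_f$, which follows by writing any $x\in\Lambda\cap B_*$ as $x=\sum_i c_i s_i$ with $s_i\in\pm P\cup\pm B_f$ and $\sum_i|c_i|\le 1$, then applying the coordinate projection $P_\Lambda:A\to\Lambda$ termwise and using $P_\Lambda(\pm P)\subseteq(1+\eps_0)^{-1}B_\Lambda\subseteq B_f$ (via $1$-unconditionality of $B_A$) to rewrite $x$ as an absolutely convex combination of elements of $B_f$; (ii) $(1+\eps)^{-1}\|x\|_A<\|x\|_*<(1+\eps)\|x\|_A$ for $x\ne 0$, from the inclusions $(1-\delta)(1+\eps_0)^{-1}B_A\subseteq P\subseteq B_*\subseteq(1+\eps_0)B_A$; (iii) $B_*$ is $K$-unconditional, since both $P$ and $B_f$ are; and (iv) $B_*$ is rational, being the absolute convex hull of two rational polytopes.

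Finally, $(A,\|\cdot\|_*)$ is a finite-dimensional rational $K$-based Banach space containing $(\Lambda,\|\cdot\|_f)$ as a based subspace, and $f:(\Lambda,\|\cdot\|_f)\to X$ is an isometry $\BI_K$-morphism. The $\RI_K$-universality of $X$ then produces an isometric extension $\bar f:(A,\|\cdot\|_*)\to X$, which by (ii) is an $\eps$-isometry $\catB$-morphism from $(A,\|\cdot\|_A)$ into $X$ extending $f$, as required by Definition~\ref{def2}. The main obstacle is the construction of $\|\cdot\|_*$ in Step 2: one must simultaneously match $\|\cdot\|_f$ exactly on $\Lambda$, preserve $K$-unconditionality, stay rational, and remain multiplicatively close to $\|\cdot\|_A$. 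The key observation that makes it possible is that shrinking the approximating polytope $P$ by the factor $(1+\eps_0)^{-1}$ forces $P\cap\Lambda$ to sit inside $B_f$ automatically, so taking the absolute convex hull $B_*$ of $P$ and $B_f$ leaves the $\Lambda$-slice of $B_*$ equal to $B_f$ without enlarging it.
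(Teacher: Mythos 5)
Your overall strategy is the same as the paper's: pull the norm of $X$ back to $\Lambda$ via $f$, extend it to a rational $K$-based norm on $A$ that agrees with the pulled-back norm on $\Lambda$ and is multiplicatively $(1+\eps)$-close to $\|\cdot\|_A$, then invoke $\RI_K$-universality. The slicing argument via the coordinate projection $\pr_\Lambda$ (using $1$-unconditionality of $A$ to show the shrunken polytope $P$ projects into $B_f$) is exactly the paper's. However, there is a genuine gap in your Step 2: the body $B_*=\conv(\pm P\cup\pm B_f)$ does not contain the basis vectors $b\in\bas_A\setminus\bas_\Lambda$, so $(A,\|\cdot\|_*)$ is not a based Banach space and Definition~\ref{def1} cannot be applied to it. Concretely, for such $b$ the coordinate functional $\mathbf e_b^*$ satisfies $\mathbf e_b^*(B_A)\subseteq[-1,1]$ by $1$-unconditionality of $A$, hence $\mathbf e_b^*(P)\subseteq[-(1+\eps_0)^{-1},(1+\eps_0)^{-1}]$ because you forced $P\subseteq(1+\eps_0)^{-1}B_A$, while $\mathbf e_b^*(B_f)=\{0\}$ since $B_f\subseteq\Lambda$; therefore $\sup\mathbf e_b^*(B_*)\le(1+\eps_0)^{-1}<1=\mathbf e_b^*(b)$, so $b\notin B_*$ and $\|b\|_*\ge 1+\eps_0>1$. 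Normalization of the basis is part of the definition of a ($K$-)based Banach space, and it is also forced by the conclusion: an isometric extension $\bar f$ must send $b$ into $\bas_X$, whose elements have norm $1$.

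The repair is exactly what the paper does: adjoin $\pm\bas_A$ to the generating set, i.e.\ take $B'_A=\conv\big(B_f\cup(\pm\bas_A)\cup P\big)$. This gives $\|b\|'_A\le 1$, and the reverse inequality is proved with the same functional $\mathbf e_b^*$ (now $\sup\mathbf e_b^*$ over the generating set equals $1$, attained at $b$). One must then re-run your checks (i)--(iii) with the enlarged generating set; they survive because $\pr_\Lambda(\pm\bas_A)=\{0\}\cup(\pm\bas_\Lambda)\subseteq B_f$ (each $b\in\bas_\Lambda$ has $\|b\|_f=\|f(b)\|_X=1$) and $T_F(\pm\bas_A)=\pm\bas_A$. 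A minor further point: your condition $(1+\eps_0)/(1-\delta)\le 1+\eps$ yields only $\|x\|_*\le(1+\eps)\|x\|_A$, whereas an $\eps$-isometry requires strict inequalities, so this should be taken strict.
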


\begin{proof}
We shall use the fact, that the norm of any finite-dimensional based Banach space can be approximated by a rational norm (which means that its unit ball coincides with the convex hull of finitely many points having rational coordinates in the basis).

Let $X$ be an $\RI_K$-universal rational $K$-based Banach space for some $K\ge 1$. To prove that $X$ is almost $\FI_1$-universal,  take any $\eps>0$, any finite-dimensional $1$-based Banach space $A$ and any $\eps$-isometry $\catB$-morphism $f:\Lambda\to X$ defined on a based subspace $\Lambda$ of $A$.  For every $F\subset \bas_{A}$ consider the linear operator $T_F:A\to A$ defined by the formula $T_F(\sum _{b\in \bas_{A}} x_b \cdot b)=\sum _{b\in  F} x_b \cdot b - \sum _{b\in \bas_A\setminus F} x_b \cdot b$ for $(x_b)_{b\in\bas_A}\in\mathbb R^{\bas_A}$. We recall that by $\|\cdot\|_A$ and $\|\cdot\|_\Lambda$ we denote the norms of the Banach spaces $A$ and $\Lambda$. 
The $\catB$-morphism $f$ determines a new norm $\| \cdot \|_\Lambda'$ on $\Lambda$, defined by $\| a \|_\Lambda'=\|f(a)\|_X$ for $a\in \Lambda$. Since $X$ is rational and $K$-based, $\|\cdot \|'_\Lambda$ is a  rational norm on $\Lambda$ such that $\|T_F(a)\|'_\Lambda\le K\cdot\|a\|'_\Lambda$ for every $a\in \Lambda$ and every subset $F\subset \bas_\Lambda:=\bas_A\cap\Lambda$. 
Taking into account that $f$ is an $\eps$-isometry, we conclude that $(1+\eps)^{-1}<\|a\|_\Lambda'<(1+\eps)$ for every $a\in \Lambda$ with $\|a\|_\Lambda=1$. By the compactness of the unit sphere in $\Lambda$, there exists a positive $\delta <\eps$ such that $(1+\delta )^{-1}<\|a\|'_\Lambda<(1+\delta )$ for every $a\in \Lambda$ with $\|a\|_\Lambda=1$.  This inequality implies $\frac{1}{1+\delta }B_\Lambda \subset B_\Lambda'\subset (1+\delta )B_\Lambda$, where $B_\Lambda=\{a\in \Lambda: \|a\|_\Lambda\leq 1\}$ and $B_\Lambda'=\{a\in \Lambda: \|a\|'_\Lambda\leq 1\}$ are the closed unit balls of $\Lambda$ in the norms $\| \cdot \|_\Lambda$ and $\|\cdot \|_\Lambda'$. Choose $\delta '$ such that  $\delta< \delta' <\eps$.

Let $B_A=\{a\in A: \|x\|_A\leq 1\}$ be the closed unit ball of the Banach space $A$. Choose a rational polyhedron $P$ in $A$ such that $P=-P$ and $$\tfrac{1}{1+\delta'}B_A\subset P \subset\tfrac1{1+\delta}B_A.$$ Since $A$ is a 1-based Banach space, $\bigcup_{F\subset\bas_A}T_F(B_A)=B_A$. So, we can replace $P$ by the convex hull of $\bigcup_{F\subset\bas_A}T_F(P)$ and assume that $T_F(P)=P$ for every $F\subset\bas_A$.

Consider the set
$$P':=B'_\Lambda\cup(\pm\bas_A)\cup P,$$
where $\pm\bas_A=\bas_A\cup(-\bas_A)$. The convex hull $B_A':=\operatorname{conv} (P')$ of $P'$ is a rational polyhedron in the based Banach space $A$. The rational polyhedron $B_A'$, being convex and symmetric, determines a rational norm $\| \cdot\|_A'$ on $A$ whose closed unit ball coincides with $B_A'$. By $A'$ we denote the Banach space $A$ endowed with the norm $\|\cdot\|'_A$.

Taking into account that $A$ is a 1-based Banach space and $P\subset \tfrac1{1+\delta}B_A$, we conclude that $$
P'=B'_\Lambda\cup(\pm\bas_A)\cup P\subset B'_\Lambda\cup (\pm\bas_A)\cup\tfrac1{1+\delta}B_A\subset(1+\delta)B_A
$$and hence $B_A'=\mathrm{conv}(P')\subset (1+\delta)B_A$. 

Let us show that $\|a\|'_A=\|a\|'_\Lambda$ for each $a\in \Lambda$, which is equivalent to the equality $B_A'\cap \Lambda=B_\Lambda'$. The inclusion $B_\Lambda'\subset B_A'\cap \Lambda$ is evident.
To prove the reverse inclusion $B_\Lambda'\supset B_A'\cap \Lambda$, consider the coordinate projection $$\pr_\Lambda:A\to \Lambda,\;\;\textstyle{\pr_\Lambda:\sum_{b\in\bas_A}x_bb\mapsto \sum_{b\in\bas_\Lambda}x_bb},$$onto the subspace $\Lambda$ of $A$. Taking into account that $A$ is a 1-based Banach space, we conclude that $\pr_\Lambda(B_A)=B_A\cap\Lambda=B_\Lambda$. 
Then $$
\begin{aligned}
\pr_\Lambda(P')&=\pr_\Lambda(B_\Lambda')\cup\pr_\Lambda(\pm\bas_A)\cup\pr_\Lambda(P)\subset\\
&\subset B'_\Lambda\cup\{0\}\cup(\pm\bas_\Lambda)\cup\pr_\Lambda(\tfrac1{1+\delta}B_A)=B'_\Lambda\cup \tfrac1{1+\delta}B_\Lambda=B'_\Lambda
\end{aligned}
$$as $\frac1{1+\delta}B_\Lambda\subset B_\Lambda'$. Consequently, $$B'_A\cap\Lambda\subset \pr_\Lambda(B'_A)=\pr_\Lambda(\conv(P'))=\conv(\pr_\Lambda(P'))\subset\conv(B'_\Lambda)=B_\Lambda',$$which completes the proof of the equality $B_A'\cap \Lambda=B'_\Lambda$.

The inclusion $\bas_A\subset B_A'$ implies that $\|b\|'_A\le 1$ for any $b\in\bas_A$. We claim that $\|b\|_A'=1$ for any $b\in\bas_A$. If $b\in\bas_\Lambda$, then 
$f(b)\in\bas_X$ and $\|b\|_A'=\|b\|_\Lambda'=\|f(b)\|_X=1$.
If $b\notin \bas_\Lambda$, then we can  consider the coordinate functional $\mathbf e^*_b\in A^*$ of $b$.
Since $\bas_A$ is a $1$-unconditional basis for the 1-based Banach space $A$, $\mathbf e^*_b(B_A)\subset[-1,1]$.  Then $$\mathbf e^*_b(P')\subset \mathbf e^*_b(B'_\Lambda)\cup\mathbf e^*_b(\pm\bas_A)\cup \mathbf e^*_b(B_A)\subset  \{0\}\cup \{-1,0,1\}\cup [-1,1]=[-1,1],$$ which means that the functional $\mathbf e^*_b$ has norm $\|\mathbf e^*_b\|'_{A^*}\le 1$ in the dual Banach space $(A')^*$.
Then $1=\mathbf e^*_b(b)\le\|\mathbf e^*_b\|'_{A^*}\cdot\|b\|'_{A}\le \|b\|'_{A}$ and hence $\|b\|'_{A}=1$. Therefore, the Banach space $A'$ endowed with the base $\bas_{A'}:=\bas_A$ is a based Banach space.

Next, we show that the based Banach space $A'$ is $K$-based. Indeed, for any $F\subset \bas_{A}$ we get 
$$
T_F(P')=T_F(B'_\Lambda)\cup T_F(\pm\bas_A)\cup T_F(P)\subset K\cdot B'_\Lambda\cup(\pm\bas _A)\cup P=[-K,K]\cdot P'$$ and hence 
\begin{multline*}
T_F(B'_A)=T_F(\conv(P'))=\conv(T_F(P'))\subset \conv([-K,K]\cdot P')=\\
=[-K,K]\cdot\conv(P')=K\cdot B_A',
\end{multline*} witnessing that the based Banach space $A'$ is $K$-based.

The inclusions $\frac{1}{1+\delta' }B_A\subset B_A'\subset (1+\delta )B_A$ imply the strict inequality 
\begin{equation}\label{eq1}
(1+\eps)^{-1}\|a\|_A < \|a\|_A' <(1+\eps)\|a\|_A
\end{equation} holding for all $a\in A\backslash \{0\}$.

Let $\Lambda'$ and $A'$ be the $K$-based Banach spaces $\Lambda$ and $A$ endowed with the new rational norms $\|\cdot\|_\Lambda'$ and $\|\cdot\|_A'$, respectively. It is clear that $\Lambda'\subset A'$. The definition of the norm $\|\cdot\|'_\Lambda$ ensures that $f:\Lambda'\to X$ is a $\BI$-morphism.
Using the $\RI_K$-universality of $X$, extend the isometry morphism $f:\Lambda'\to X$ to an isometry morphism $\bar f:A'\to X$. The inequalities~\eqref{eq1} ensure that $\bar f:A\to X$ is an $\eps$-isometry $\catB$-morphism from $A$, extending the $\eps$-isometry $f$.
This completes the proof of the almost $\FI_1$-universality of $X$.
\end{proof}

Combining Corollary~\ref{kun} with Theorem~\ref{alrat}, we get another model of the $\catB$-universal Pe\l czy\'nski's space $\U$.

\begin{corollary}\label{c:ru} Each $\RI_K$-universal rational $K$-based Banach space $\U_K$ is $\catB$-isomorphic to the $\catB$-universal Pe\l czy\'nski's space $\U$.
\end{corollary}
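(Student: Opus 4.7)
The plan is to show that this corollary is essentially an immediate consequence of chaining together two previously established results: Theorem~\ref{alrat} and Corollary~\ref{kun}. So the ``proof'' amounts to verifying that the hypotheses of one feed the hypotheses of the other.

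First I would invoke Theorem~\ref{alrat}, which applies directly to $\U_K$: since $\U_K$ is by construction an $\RI_K$-universal rational $K$-based Banach space, Theorem~\ref{alrat} immediately yields that $\U_K$ is almost $\FI_1$-universal. No additional verification is needed, as $\U_K$ satisfies the hypotheses of Theorem~\ref{alrat} verbatim.

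Next I would apply Corollary~\ref{kun}, which asserts that any almost $\FI_1$-universal based Banach space is $\catB$-isomorphic to the Pe\l czy\'nski space $\U$. Feeding the conclusion of the previous step into this corollary gives the desired $\catB$-isomorphism $\U_K\cong \U$.

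There is essentially no obstacle here; the corollary is a bookkeeping step that records the transitive conclusion of the chain $\RI_K\text{-universal rational }\Rightarrow\text{ almost }\FI_1\text{-universal}\Rightarrow\catB\text{-isomorphic to }\U$. The only conceptual subtlety worth flagging to the reader is that the parameter $K$ plays no role in the final conclusion: although $\U_K$ depends on $K$ as a rational $K$-based space, all the spaces $\U_K$ collapse to a single isomorphism class (that of $\U$) once one forgets the basis constant and passes to $\catB$-isomorphism. This matches Pe\l czy\'nski's uniqueness Theorem~\ref{t:up}.
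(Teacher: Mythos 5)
Your proposal is correct and matches the paper's own derivation exactly: the corollary is obtained by combining Theorem~\ref{alrat} (which gives almost $\FI_1$-universality of $\U_K$) with Corollary~\ref{kun} (which identifies any almost $\FI_1$-universal based Banach space with Pe\l czy\'nski's space $\U$ up to $\catB$-isomorphism). No gaps.
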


{\bf Acknowledgments.} Research of the second author was suppored by NCN grant DEC-2013/11/N/ST1/02963.

The authors express their sincere thanks to anonymous referees for valuable remarks and constructive criticism that eventually led us to the discovery of the non-existence of almost $\FI_K$-universal based Banach spaces for $K>1$ in Proposition~\ref{p:non}.

\bibliographystyle{amsplain}

\end{document}